\documentclass[10pt, letterpaper]{article}
\usepackage{hyperref}
\usepackage{amsmath,amssymb, amsopn, amsthm}
\usepackage{enumerate, framed, xcolor}

\usepackage{mathtools}
\usepackage{enumerate}

\usepackage{tikz-cd}
\usetikzlibrary{graphs, graphs.standard}
\usepackage{enumitem}

\usepackage{titlesec}
\titleformat{\section}
  {\normalfont\large\bfseries}
  {\thesection}{1em}{}

\titlespacing*{\section}
  {0pt}{3ex plus 1ex minus .2ex}{2ex}
\titleformat{\subsection}
  {\normalfont\normalsize\bfseries}
  {\thesubsection}{1em}{}

\titlespacing*{\subsection}
  {0pt}{2ex plus .5ex}{1.5ex}

\theoremstyle{definition}
\newtheorem{theorem}{Theorem}[section]
\newtheorem{lemma}[theorem]{Lemma}

\newtheorem{corollary}[theorem]{Corollary}

\newtheorem{definition}[theorem]{Definition}

\renewenvironment{proof}[1][\unskip]{%
\par
\noindent
\textbf{Proof #1.}
\noindent}
{\hfill$\blacksquare$

\bigskip}

\usepackage{etoolbox}

\newcommand{\defqedsymbol}{\hfill\ensuremath{\square}} 

\AtBeginEnvironment{definition}{\pushQED{\defqedsymbol}}
\AtEndEnvironment{definition}{\popQED}

\let\<=\langle
\let\>=\rangle

\usepackage[colorinlistoftodos]{todonotes}

\title{Uncountably many conditionally inaccessible decisions exist in every finite probability space}
\author{Zal\'an Gyenis\thanks{Jagiellonian University, Krak\'ow}\and Mikl\'os R\'edei\thanks{London School of Economics and Political Science, London}\and Leszek Wro\'nski${}^{*}$}

\date{\today}

\newcommand{\E}{\mathbb E}

\begin{document}

\maketitle

\begin{abstract}
In a recent paper \cite{Redei-Jing2026} the notion of conditional $p$-inaccessibility of a decision based on utility maximization was defined and examples of conditionally $p$-inaccessible decisions were given. 
The conditional inaccessibility of a decision based on maximizing utility calculated by a probability measure $p^*$ expresses that the decision cannot be obtained if the expectation values of the utility functions are calculated using the (Jeffrey) conditional probability measure obtained by conditioning $p$ on partial evidence about the probability $p^*$ that determines the decision. The paper \cite{Redei-Jing2026} conjectured that conditionally $p$-inaccessible decisions exist in some probability spaces having arbitrary large finite number of elementary events. In this paper we prove that for any $p$ in any finite probability space there exist an uncountable number of probability measures $p^*$ for each of which there exist an uncountable number of pairs of utility functions that represent conditionally $p$-inaccessible decisions. If $p^*$ is an objective probability determining objectively good decisions and $p$ is the subjective probability determining a rational decision of a decision making Agent, the result says that there is an enormous number of decision situations in which the Agent's subjective probability prohibits the Agent's informed rational decision to be objectively good.      
\end{abstract}

\section{The main claims}
The notion of a \emph{conditionally $p$-inaccessible decision} in the context of decision theory based on utility maximization was introduced by R\'edei and Jing in their recent paper \cite{Redei-Jing2026}. 

In decision theory based on utility maximization, actions of an Agent are represented by utility functions and the Agent decides between two actions on the basis of the order of the expectation values of the utility functions calculated by the Agent using a probability measure $p^*$: the Agent prefers the action with higher $p^*$-expected utility. The conditional $p$-inaccessibility of such a decision expresses that the decision cannot be obtained if the expectation values of the utility functions are calculated using the conditional probability measure obtained by conditioning the probability measure $p$ on partial evidence about the probability $p^*$ that determines the decision (Definition 3.2 in \cite{Redei-Jing2026}). It is shown in \cite{Redei-Jing2026} that there exist conditionally $p$-inaccessible decisions in the context of \emph{some} probability spaces having a finite number of elementary events; furthermore, it is proved in \cite{Redei-Jing2026} that if there is a conditionally $p$-inaccessible decision in a finite probability space then there exist uncountably many $p$-inaccessible decisions in that probability space (Proposition 6.1 in \cite{Redei-Jing2026}). It is conjectured in \cite{Redei-Jing2026} that this situation is typical in the sense that for \emph{any} natural number $n\geq 3$ conditionally inaccessible decisions exist in \emph{some} probability spaces with $n$ number of elementary events (Conjecture 7.3 in \cite{Redei-Jing2026}).

In this paper we prove a theorem that is stronger than the conjecture stated above: We show that for \emph{any} probability measure $p$ in \emph{any} probability space having $n\geq 3$ number of elementary events there exist uncountably many probability measures $p^*$ for each of which there exist uncountably many pairs of utility functions that represent conditionally $p$-inaccessible decisions (Theorem \ref{thm:main1}). 

The significance for decision theory of the abundance of conditionally $p$-inaccessible decisions transpires if one interprets $p^*$ as \emph{objective} probability that determines an \emph{objectively good} decision and if $p$ is the \emph{subjective} probability of a decision making Agent: If an objectively good decision based on utility maximization using $p^*$ is conditionally $p$-inaccessible, then the Agent's subjective probability \emph{prohibits} the Agent's \emph{informed rational decision} to be objectively good. ``\emph{Informed rational decision}'' means here that the Agent makes the decision on the basis of calculating the expectation values of the utility functions using the subjective probability $p$ upgraded by conditionalizing it on true but incomplete evidence about $p^*$. 

The paper \cite{Redei-Jing2026} also introduces a discrete, finite-range measure of degree of conditional $p$-inaccessibility of a decision (Definition 7.1 in \cite{Redei-Jing2026}). The measure reflects how far a decision is from being conditionally $p$-inaccessible; thereby the measure reflects how suitable a subjective probability $p$ is to make objectively good decisions on the basis of upgrading $p$ by conditioning on information about $p^*$ and using the upgraded probability to calculate expectation values of utility functions. Examples are given in \cite{Redei-Jing2026} that display different degrees of conditionally $p$-inaccessible decisions, and it is conjectured in \cite{Redei-Jing2026} that given any decision context with a fixed pair of utility functions and a fixed $p^*$, for any logically possible degree $k$ of conditional inaccessibility there exists priors $p_k$ such that the decision is conditionally $p_k$-inaccessible (Conjecture 7.2 in \cite{Redei-Jing2026}). We investigate here the properties of the degree of conditional $p$-inaccessibility of a decision for a general pair $(p^*,p)$ and show that this conjecture is true (Theorem \ref{thm:main2}). 

\section{Definition of conditionally $p$-inaccessible decisions\label{sec:defs}}

Throughout the paper $(X,\mathcal{S})$ denotes a measurable space with $X$ being the finite set $X=\{1,\dots,n\}$ with $n\ge 3$ and $\mathcal{S}$ being the Boolean algebra of the power set of $X$. For a probability measure $q$ on $\mathcal{S}$ we write $q(i)$ instead of $q(\{i\})$ and $\E_q[f]\doteq \sum_{i=1}^n f(i)q(i)$ denotes the expectation value of the random variable $f\colon X\to \mathbb{R}$ with respect to $q$. 
 
A partition of $X$ is denoted by $\Pi = \{B_1,\dots,B_m\}$, where the blocks $B_j$ are nonempty, disjoint, and we have $\bigcup_j B_j=X$. We call $\Pi$ \emph{proper non-trivial} if $2\le m\le n-1$. The set of all proper non-trivial partitions is denoted by $\mathfrak P$. 

The next definition recalls the concept of conditioning in the context of probability spaces having a finite number of elementary events. The definition is a special case of conditioning with respect to Boolean subalgebras (see e.g. Chapter 6. in \cite{Billingsley1995}, or any of \cite{Williams1991}, \cite{Ash-Doleans-Dade1999}, \cite{Rosenthal2006} for the mathematical theory of conditioning with respect to Boolean algebras; more generally with respect to $\sigma$-fields). In the philosophical literature conditioning with respect to Boolean algebras generated by countable partitions is called "Jeffrey conditioning" \cite{Jeffrey1983}, \cite{Jeffrey1992} (the terminology "probability kinematics" also is used to refer to Jeffrey conditioning, see \cite{Diaconis-Zabell1982}).    
\begin{definition}
Given $(X,\mathcal{S})$ and probability measures $p^*$ and $p$ on $\mathcal{S}$, for any partition $\Pi$ the Jeffrey posterior $q_{\Pi}$ of $p$ determined by $\Pi$ is
\begin{equation}\label{eq:Jeffrey-mixture}
q_{\Pi}(i)\;=\; \sum_{B\in\Pi} p^*(B)\, p(i\mid B),
\end{equation}
where $p(i\mid B)=p(i)/p(B)$ for $i\in B$. Thus, the distribution $q_\Pi$ on $X$ is given by
\[
q_\Pi(i)=p^*(B)\,p(i\mid B)\qquad\text{for }i\in B\in\Pi.
\]
\end{definition}
Note that $q_{\Pi}$ is an extension to $\mathcal{S}$ of the restriction of $p^*$ to the Boolean subalgebra $\mathcal{A}$ of $\mathcal{S}$ generated by the partition $\Pi$. The Jeffrey posterior $q_{\Pi}$ was denoted by $p^*_{p,\mathcal{A}}$ in \cite{Redei-Jing2026}. To keep notation simple, instead of $p^*_{p,\mathcal{A}}$, we use $q_{\Pi}$ throughout the paper, leaving out reference to $p^*$ and $p$, which are known from the context. 

\begin{definition}
Given a probability space $(X,\mathcal{S}, q)$ and two random variables $f_1,f_2\colon X\to\mathbb{R}$, we call
\begin{equation}
\big\langle (X,\mathcal{S},q),f_1,f_2 \big\rangle
\end{equation}
a \emph{decision context} and the inequality 
\begin{equation}\label{eq:decision-1}
\E_q[f_1]  >\E_q[f_2]
\end{equation}
a \emph{decision}. We call the decision context \emph{trivial} if $f_1(x)>f_2(x)$ for all $x$ in $X$. To avoid trivial decision situations, in what follows we assume that all decision contexts are non-trivial. In a non-trivial decision context the decision (\ref{eq:decision-1}) can be equivalently written as 
\begin{equation}\label{eq:decision-2}
\E_{p^*}[d]>0
\end{equation}
with the non-strictly positive function $d=f_1-f_2$. 
\end{definition}
The decision theoretic interpretation of the elements in the decision context $\big\langle (X,\mathcal{S},{p}^*),f_1,f_2 \big\rangle$ is the following: The random variables $f_1$ and $f_2$ represent possible actions of a decision making Agent; the numbers $f_1(x)$ and $f_2(x)$ represent the values of the actions $f_1$ and $f_2$ from the perspective of the Agent if the state $x$ of the world obtains -- the larger $f_1(x)$ and $f_2(x)$, the more attractive the actions $f_1$ and $f_2$ are from the Agent's perspective.
The probability measure $q$ in (\ref{eq:decision-1})-(\ref{eq:decision-2}) can be interpreted in two ways: 
\begin{itemize}
    \item[(i)] $q$ can be viewed as an objective probability (e.g. describing relative frequencies). In this case the decision (\ref{eq:decision-1})-(\ref{eq:decision-2}) is \emph{objectively good} because it expresses that action $f_1$ has a higher average value than action $f_2$ does. If $q$ is interpreted as objective probability, we use the notation $p^*$ to refer to it.   
    \item[(ii)] $q$ can be viewed as a subjective probability (also called "credence") expressing degrees of beliefs or expectations of an Agent. In this case the decision (\ref{eq:decision-1})-(\ref{eq:decision-2}) is \emph{rational} because it is in harmony with the Agent's expectations. When $q$ is interpreted this way, we use the notation $p$ to refer to it. 
\end{itemize}
See the monographs  \cite{Fishburn1970}, \cite{Kreps2019}, \cite{Gilboa2009}, \cite{Bradley2017} for the details and \cite{BriggsSEP2023}, \cite{BuchakSEP2022} for compact reviews of the main ideas of utility theory.

The next definition was given in \cite{Redei-Jing2026}, it specifies the central notion of the paper: the concept of a conditionally $p$-inaccessible decision.
\begin{definition}
Given a decision context $\big\langle (X,\mathcal{S},{p}^*),f_1,f_2\big\rangle$ with an objective probability $p^*$, and given a probability measure $p$ on $\mathcal{S}$ representing credence, the decision 
\begin{equation}\label{eq:decision-3}
\E_{p^*}[d]>0
\end{equation} 
with $d=f_1,-f_2$ is called 
\begin{itemize}
\item \emph{conditionally} \emph{$p$-inaccessible} if
\begin{equation}\label{eq:cond-inacc}
\E_{q_\Pi}[d]\le 0 \ \text{ for all }  \Pi\in\mathfrak{P}.
\end{equation}
and 
\item \emph{conditionally strongly} $p$-\emph{inaccessible} if 
\begin{equation}
\E_{q_\Pi}[d]<0 \ \text{ for all }  \Pi\in\mathfrak{P}.
\end{equation}
\end{itemize}
\end{definition}
\bigskip
The interpretation of conditional $p$-inaccessibility of a decision is that
\begin{quote}
    ... the prior $p$ of the Agent makes it impossible for the Agent to reach the objectively good decision if the Agent follows the conditioning strategy: [...] if the Agent calculates the expectation values of the utility functions using probabilities inferred via conditionalizing his prior $p$ on partial information about the objective probability, then either the Agent cannot make a decision between $f_1$ and $f_2$  (because the Agent is indifferent between $f_1$ and $f_2$), or the decision the Agent makes will be objectively wrong -- \emph{no matter what partial information the Agent has about the objective probability}. \cite{Redei-Jing2026}
\end{quote}
The difference between conditional $p$-inaccessibility and conditional \emph{strong} $p$-inaccessibility is that if a decision is conditionally strongly $p$-inaccessible then the Agent having prior $p$ is never neutral about which action to take but the Agent's decision will always be wrong if it is based on calculating the expectation values of the utility functions using the updated prior $q_\Pi$. 

It was shown in \cite{Redei-Jing2026} that if there exist conditionally $p$-inaccessible decisions, then there exist conditionally \emph{strongly} $p$-inaccessible decisions as well; hence, from the perspective of the question of existence of $p$-inaccessible decisions, the difference between strong and "regular" $p$-inaccessibility is irrelevant.   

Note that the definition of conditional $p$-inaccessibility does \emph{not} include the condition that the Agent's decision based on the \emph{unconditioned} $p$ is also objectively wrong:  the definition of conditional $p$-inaccessibility does \emph{not} require $\E_p(d)\leq 0$. This leads to the question of whether it can happen that the decision made by the Agent based on utility calculations using $p$ is objectively good (i.e. $\E_p(d)> 0$) but at the same time it is conditionally $p$-inaccessible. If this could happen, it would be very surprising: it would mean that learning relevant objective truth and taking it into account by conditioning on it, whereby the updated prior gets closer to the objective probability, would lead to a wrong decision -- whereas without the learning the decision based on non-updated prior was objectively good already. We show here that this cannot happen: an informed rational decision (i.e. a decision based on calculating the expectation values using $q_{\Pi}$) cannot be worse than a less informed one (based on calculating expectation values using $p$). This is formulated in Theorem \ref{thm:main2} below. 
\begin{theorem}\label{thm:main2}
	Let $\big\langle (X,\mathcal{S},{p}^*),f_1,f_2\big\rangle$ be a decision context with an objective probability $p^*$, and decision 
\begin{equation}
\E_{p^*}[d]>0
\end{equation} 
with $d=f_1,-f_2$. Let $p$ be  probability measure $p$ on $\mathcal{S}$ representing credence. Suppose that 
	\[
	\E_{q_\Pi}[d]\le 0\ \ \text{ for all }  \Pi\in\mathfrak{P}\,.
	\]
	Then $\E_p[d]<0$.	
\end{theorem}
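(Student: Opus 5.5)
The plan is to argue by contradiction. Assume $\E_{q_\Pi}[d]\le 0$ for every $\Pi\in\mathfrak P$, $\E_{p^*}[d]>0$, and $\E_p[d]\ge 0$. The aim is then to exhibit some proper non-trivial partition $\Pi$ with $\E_{q_\Pi}[d]>0$.

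\textbf{Notation.} The structural fact behind the argument is that the map $\Pi\mapsto \E_{q_\Pi}[d]$ interpolates between two extremes. The trivial partition $\{X\}$ gives $\E_p[d]$, and the discrete partition into singletons gives $\E_{p^*}[d]$. Neither extreme is in $\mathfrak P$, but every proper non-trivial partition lies strictly between them. For a block $B$ with $p(B)>0$ I write $D_B=\E_p[d\mid B]=\sum_{i\in B}d(i)p(i)/p(B)$, so that
\[
\E_{q_\Pi}[d]=\sum_{B\in\Pi}p^*(B)D_B,\qquad \E_p[d]=\sum_{B\in\Pi}p(B)D_B .
\]
(Blocks with $p(B)=0$ are degenerate and will be handled separately, or excluded by the standing assumptions on $p$.)

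\textbf{Key computation.} I will record how $\E_{q_\Pi}[d]$ changes when a block $B$ is split into $B_1\cup B_2$. Put $a=p(B_1)/p(B)$ and $a^*=p^*(B_1)/p^*(B)$. Then
\[
p^*(B_1)D_{B_1}+p^*(B_2)D_{B_2}-p^*(B)D_B \;=\; p^*(B)\,(a^*-a)\,(D_{B_1}-D_{B_2}).
\]
So the change caused by a single split has the sign of $(a^*-a)(D_{B_1}-D_{B_2})$, a covariance-type term between the likelihood ratio $r=p^*/p$ and $d$ on $B$. Summed over a maximal refinement chain from $\{X\}$ to the discrete partition, these increments add up to $\E_{p^*}[d]-\E_p[d]$. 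The first step is therefore to fix such a chain. It passes through proper partitions at every intermediate stage, because $n\ge 3$ guarantees at least one intermediate level with $2\le m\le n-1$.

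\textbf{Three cases.}
\begin{itemize}
\item If $\E_p[d]>0$: the first split, into a two-block partition, changes the value by one increment. I will choose that split to make $(a^*-a)(D_{B_1}-D_{B_2})\ge 0$. Concretely, take $B_1$ to be a super-level set of $r$, namely $B_1=\{i: r(i)\ge t\}$, or a super-level set of $d$, whichever makes the two factors agree in sign. The resulting two-block partition then has $\E_{q_\Pi}[d]\ge \E_p[d]>0$, a contradiction.
\item If $\E_p[d]=0$: working from the fine end instead, I will merge two singletons $\{j,k\}$ chosen so that the merge loses as little as possible. Such a merge gives the partition $\Pi_{jk}$ with $n-1$ blocks, which is proper. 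Comparing with $\E_{p^*}[d]>0$ should show that $\E_{q_{\Pi_{jk}}}[d]>0$ for a suitable pair.
\item If no single split or merge works: I will fall back on the chain argument. Every chain's increments sum to $\E_{p^*}[d]-\E_p[d]\ge \E_{p^*}[d]>0$, and all intermediate values would have to be $\le 0$. I will try to show that the increments can be reordered, by choosing the chain greedily with the largest positive increment first, so that some intermediate partial sum is positive.
\end{itemize}

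\textbf{Expected obstacle.} The main difficulty is showing that a split with nonnegative increment always exists and can be placed at the first or last step. The factors $a^*-a$ and $D_{B_1}-D_{B_2}$ are governed by two different orderings of $X$, one by $r$ and one by $d$, so a single threshold set need not align them. My first attempt will be an averaging argument over all two-block partitions $\{A,X\setminus A\}$. If the average of $\E_{q_{\{A,A^c\}}}[d]$, with suitable weights, is a positive combination of $\E_p[d]$ and $\E_{p^*}[d]$, then some $A$ gives a positive value, which is the required contradiction. The case $n=3$, where $\mathfrak P$ consists exactly of the three partitions $\{\{i\},X\setminus\{i\}\}$, will serve as the test case for finding these weights.
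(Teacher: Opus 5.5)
\textbf{There is a genuine gap: the proposal contains no proof, and its concrete mechanisms fail.}

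\textbf{Case $\E_p[d]>0$.} You claim a first split $X=B_1\cup B_2$ with $(a^*-a)(D_{B_1}-D_{B_2})\ge 0$ can always be chosen. This is false. For $\Pi=\{A,A^c\}$ the increment is $(p^*(A)-p(A))(D_A-D_{A^c})$. If $d=\mu-c(r-1)$ with $r=p^*/p$ and $c>0$, the increment equals $-c\,(p^*(A)-p(A))^2/\bigl(p(A)p(A^c)\bigr)\le 0$ for \emph{every} $A$. Concretely, take $p$ uniform on $\{1,2,3\}$, $p^*=(1/2,3/10,1/5)$ and $d=2-r=(1/2,11/10,7/5)$. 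Then $\E_p[d]=1$ and $\E_{p^*}[d]=43/50$, while the three proper partitions give $\E_{q_\Pi}[d]=7/8,\ 199/200,\ 23/25$, all strictly below $\E_p[d]$. So no level set of $r$ or of $d$, and indeed no set at all, gives $\E_{q_\Pi}[d]\ge\E_p[d]$. The contradiction you want does hold here, but not for your reason.

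\textbf{Merge case.} You only say a comparison ``should show'' the result, which is the whole content of the theorem.

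\textbf{Fallback case.} The inequality $\E_{p^*}[d]-\E_p[d]\ge\E_{p^*}[d]$ is backwards under your standing assumption $\E_p[d]\ge 0$. Moreover, the increments along a chain cannot be reordered freely, since the increment of a split depends on the block being split at that stage.

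\textbf{The missing idea.} What is needed is a linear identity writing $p$ as an affine combination of finitely many proper posteriors and $p^*$, with a \emph{negative} coefficient on $p^*$. Such an identity handles every sign of $\E_p[d]$ at once. First, $r$ must be injective: otherwise the partition with one block $\{i,j\}$, $r(i)=r(j)$, and singletons elsewhere is proper and has $q_\Pi=p^*$ by Lemma~\ref{lem:qequals}. The paper then proceeds as follows:
\begin{enumerate}
\item It orders $X$ by the likelihood ratio and uses the adjacent merges $\Pi_m$ (block $\{m,m+1\}$, singletons elsewhere). This is your ``merge two singletons'', with the pairs dictated by the ordering.
\item It observes $q_{\Pi_m}-p^*=A_m(e_{m+1}-e_m)$ with $A_m>0$.
\item It telescopes $p-p^*=\sum_m S_m(e_{m+1}-e_m)$ with $S_m>0$.
\item It proves the key estimate $S_m\ge A_m$, which gives
\[
\E_p[d]=\sum_{m=1}^{n-1}t_m\,\E_{q_{\Pi_m}}[d]-(t-1)\,\E_{p^*}[d],\qquad t_m=S_m/A_m\ge 1,\quad t=\sum_m t_m>1,
\]
and hence $\E_p[d]<0$.
\end{enumerate}
The paper first assumes $p^*>0$ and removes this assumption via $(1-\varepsilon)p^*+\varepsilon p$.

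\textbf{Completing your averaging idea.} Your averaging over two-block partitions can be made to work in the same spirit, but only once the weights are exhibited. Order $X$ so that $r=p^*/p$ is strictly decreasing, let $U_m=\{1,\dots,m\}$, and let $\bar r_m=p^*(U_m)/p(U_m)$ and $\underline r_m=p^*(U_m^c)/p(U_m^c)$. The weights $v_m=(r(m)-r(m+1))/(\bar r_m-\underline r_m)>0$ satisfy $\sum_m v_m\,q_{\{U_m,U_m^c\}}=p^*+(V-1)p$ with $V=\sum_m v_m>1$. The strict inequality holds because $\underline r_1>r(n)$ when $n\ge 3$. This variant needs only $p>0$. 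Supplying and verifying an identity of this kind is precisely the step your proposal leaves open.
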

We prove this theorem in the Appendix. 

\section{Existence of an uncountable number of conditionally inaccessible decisions in all finite probability spaces}
The existence of conditionally $p$-inaccessible decisions is non-trivial: The number of inequalities in eq. (\ref{eq:cond-inacc}) that express conditional inaccessibility is equal to the cardinality of the set of all proper non-trial partitions of $X$, which grows exponentially as the number of elementary events in $X$ gets larger. Thus, checking numerically the conditional inaccessibility of decisions in specific decision contexts becomes an intractable problem as the number of elements in $X$ grows. But there \emph{are} conditionally $p$-inaccessinble decisions: The paper \cite{Redei-Jing2026} gives some examples of conditionally $p$-inaccessible decisions in probability spaces having 3 and 4 elementary events -- in the latter case the number of non-trivial partitions is 13, this allows numerical calculations. The paper \cite{Redei-Jing2026} also formulates the following   

\medskip
\noindent {\bf Conjecture 7.3 in \cite{Redei-Jing2026}}:
For every $n\ge 3$ there exists a non-trivial decision context $\langle (X,\mathcal{S},p^*) f_1, f_2\rangle$ (with $X$ having $n$ elements) and a prior $p$ such that the decision
$\E_{p^*}[f_1]>\E_{p^*}[f_2]$ is conditionally $p$-inaccessible.
\medskip

We prove this conjecture by proving the even stronger Theorem \ref{thm:main1} below. Before stating the theorem we recall the notion of Bayes Blind Spot of a probability measure:
\begin{definition}[\cite{GyenisZ-RedeiBayes-General}, \cite{GyenisZ-Redei-Bayes-BlindSpot-Synthese}]
Given a probability measure space $(X,\mathcal{S}, p)$, the Bayes $p$-Blind Spot of $p$ is the set of probability measures on $\mathcal{S}$ that are absolutely continuous with respect to $p$ and which cannot be obtained by conditioning $p$ on evidence about them; i.e. which cannot be of the form $p_{\Pi}$ for any (non-trival proper) partition $\Pi$.   
\end{definition}
It was proved in  \cite{GyenisZ-Redei-Bayes-BlindSpot-Synthese} that the Bayes $p$-Blind Spot is a large set for all probability measures $p$ in probability spaces having a finite number of elementary events, and it is known that the same holds for certain probability spaces with an infinite number of elementary events  \cite{GyenisZ-RedeiBayes-General}, \cite{Shattuck-Wagner2024}. 
\begin{theorem}\label{thm:main1}
	For any given $p^*$ and $p$ on the measurable space $(X,\mathcal{S})$ with $X$ having $n\geq 3$ number of elements the following are equivalent:
	\begin{enumerate}
		\item[(A)] There exist real valued random variables (utility functions) $f_1$ and $f_2$ on $X$ such that the decision 
        \begin{equation}\label{eq:decision-in proof}
        \E_{p^*}[f_1]>\E_{p^*}[f_2]    
        \end{equation}  
        is conditionally $p$-inaccessible.
		\item[(B)] $p^*$ is in the Bayes $p$-Blind Spot (i.e. $q_\Pi\neq p^*$ for every proper non-trivial partition $\Pi$).
	\end{enumerate}	
\end{theorem}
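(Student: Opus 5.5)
The implication (A)$\Rightarrow$(B) is immediate; for (B)$\Rightarrow$(A) I would turn the existence of $d$ into a linear separation problem and solve it with a strict-convexity argument.

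\emph{(A)$\Rightarrow$(B).} Suppose $q_\Pi=p^*$ for some $\Pi\in\mathfrak P$. Then $\E_{q_\Pi}[d]=\E_{p^*}[d]>0$ for $d=f_1-f_2$, which contradicts conditional $p$-inaccessibility.

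\emph{Degenerate support case.} First I would dispose of the case where absolute continuity fails. Suppose $p^*(i)>0$ for some $i$ with $p(i)=0$. Then every $q_\Pi$ gives $i$ zero mass. Take $d=\chi_{\{i\}}-\varepsilon$ with $\varepsilon\in(0,p^*(i))$. This gives $\E_{p^*}[d]>0$ and $\E_{q_\Pi}[d]=-\varepsilon<0$ for every $\Pi$, so (A) holds. A non-trivial decision context results by setting $f_2=0$, $f_1=d$; $d$ is not strictly positive because $n\ge 3$. Points where both $p$ and $p^*$ vanish only add coordinates on which everything is zero. So I would reduce to $p$ of full support on the relevant points and $p^*\ll p$.

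\emph{Reformulation in $L^2(p)$.} Let $r=dp^*/dp$, so $r(i)=p^*(i)/p(i)$. Use the inner product $\langle x,y\rangle=\sum_i p(i)x(i)y(i)$, and let $P_\Pi$ be the conditional expectation with respect to $p$ onto functions constant on the blocks of $\Pi$. This is an orthogonal projection. From the definition of $q_\Pi$, its density is $dq_\Pi/dp=P_\Pi r$, since on a block $B$ it equals $p^*(B)/p(B)$. Hence
\[
\E_{p^*}[d]=\langle r,d\rangle,\qquad \E_{q_\Pi}[d]=\langle P_\Pi r,d\rangle ,
\]
and (A) asks for a vector $d$ with $\langle r,d\rangle>0$ and $\langle P_\Pi r,d\rangle\le 0$ for all $\Pi\in\mathfrak P$. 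Moreover, $q_\Pi=p^*$ iff $P_\Pi r=r$, so (B) says $P_\Pi r\ne r$ for every $\Pi$.

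\emph{Separation step.} By Farkas' lemma (finite-dimensional cone separation), such a $d$ fails to exist iff $r$ lies in the closed convex cone generated by the finitely many vectors $P_\Pi r$, i.e. $r=\sum_\Pi \mu_\Pi P_\Pi r$ with $\mu_\Pi\ge 0$. Pairing with the constant function $1$ gives $\langle P_\Pi r,1\rangle=\langle r,1\rangle=1$, so $\sum\mu_\Pi=1$ and the combination is convex.

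\emph{Strict convexity (the main obstacle).} The key step is to show that, under (B), $r$ is not a convex combination of the $P_\Pi r$. Since $P_\Pi$ is an orthogonal projection, $\|P_\Pi r\|\le\|r\|$, with equality iff $P_\Pi r=r$. Under (B) every inequality is therefore strict, so
\[
\Big\|\sum_\Pi\mu_\Pi P_\Pi r\Big\|\le\sum_\Pi\mu_\Pi\|P_\Pi r\|<\|r\| .
\]
This contradicts $r=\sum\mu_\Pi P_\Pi r$, so the desired $d$ exists.

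\emph{Conclusion.} Set $f_1=d$ and $f_2=0$. The resulting context is non-trivial, since $\sum_i p(i)r(i)d(i)$-type constraints with $\langle P_\Pi r,d\rangle\le0$ and positive $P_\Pi r$ force $d$ to be negative somewhere. This proves (A). The only delicate points I expect are the bookkeeping for zero-probability atoms and confirming that Farkas' lemma applies in the exact form with a strict inequality for $r$ and non-strict inequalities for the $P_\Pi r$.
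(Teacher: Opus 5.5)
Your argument is correct, but for (B)$\Rightarrow$(A) it takes a different route from the paper.

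\emph{The paper's route.} It is constructive: it sets $g=\log r$ and decomposes $\E_{p^*}[g]-\E_{q_\Pi}[g]$ blockwise into symmetrized Kullback--Leibler divergences $D(p^*_B\|p_B)+D(p_B\|p^*_B)$. It gets strictness from injectivity of $r$ (Lemma~\ref{lem:qequals}). It then shifts $g$ by a constant lying between $\max_\Pi\E_{q_\Pi}[g]$ and $\E_{p^*}[g]$.

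\emph{Your route.} You dualize with Farkas, which does apply in exactly the mixed strict/non-strict form you need. You then rule out the dual certificate by the strict contraction $\|P_\Pi r\|<\|r\|$, using (B) directly in the form $P_\Pi r\neq r$.

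\emph{Farkas is not needed.} Your key inequality already gives an explicit witness. Since $\langle P_\Pi r,r\rangle=\|P_\Pi r\|^2$ and $\langle P_\Pi r,1\rangle=1$, the function $d=r-c$ with $\max_\Pi\|P_\Pi r\|^2<c<\|r\|^2$ is strongly conditionally $p$-inaccessible. This is the paper's construction with $\log r$ replaced by $r$, and the KL divergence replaced by the $\chi^2$-type quantity $\|r\|^2=\sum_i p^*(i)^2/p(i)$.

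\emph{What each route buys.} Your $L^2$ version has a genuine advantage. A blind-spot $p^*$ may vanish at a point (e.g.\ $p$ uniform on three points and $p^*=(0,0.3,0.7)$, where $r$ is still injective). There the paper's $g=\log r$ equals $-\infty$, so its $d$ is not real-valued without an extra truncation, whereas $r$ causes no trouble. The paper's route, in turn, delivers a concrete $g$ that is reused in Theorem~\ref{thm:degrees}, though $r$ would serve there as well.

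\emph{Degenerate-support case.} This detour is unnecessary, since the paper assumes $p(i)>0$ throughout. Without that assumption, $q_\Pi$ is not even defined for partitions with a block of $p$-measure zero. So your claim that every $q_\Pi$ gives the point $i$ zero mass would need rephrasing anyway.
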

\begin{proof}[of Theorem \ref{thm:main1}] The proof of this theorem will be based on proving two lemmas but the main idea of the proof is the following: It is clear that a necessary condition for a decision determined by $p^*$ to be conditionally $p$-inaccessible is that $p^*$ is in the Bayes $p$-Blind Spot. One can show that $p^*$ is in the Bayes $p$-Blind Spot if and only if the Radon-Nikodym derivative of $p^*$ with respect to $p$ is an injective function. Taking the logarithm of this Radon-Nikodym derivative allows utilizing the behavior of the Kullback-Leibler divergence between $p^*$ and $p$ to derive an inequality, which in turn entails that a function $d$ defined suitably in terms of the logarithm of the Radon-Nikodym derivative yields a conditionally $p$-inaccessible decision. 

Details: We write the decision (\ref{eq:decision-in proof}) in the equivalent form $\E_{p^*}[d]>0$ with $d=f_1-f_2$. 

\noindent
	Showing the implication (A)$\Rightarrow$(B): \\  If $q_\Pi=p^*$ for some proper non-trivial partition $\Pi$, then the requirement $\E_{q_\Pi}[d]\le 0$ forces $\E_{p^*}[d]\le 0$, contradicting $\E_{p^*}[d]>0$.
	
	\noindent 
    Showing the implication (B)$\Rightarrow$(A):\\ We split the proof of this implication into two lemmas. 
	First, define the function $r\colon X\to\mathbb{R}$ by
	\begin{equation}\label{eq:def-of-r}
		r(i)=\frac{p^*(i)}{p(i)} \quad \text{ for } i\in X.
	\end{equation}
	Note: the function ${r}$ is the Radon -Nikodym derivative of $p^*$ with respect to $p$.
	\begin{lemma}\label{lem:qequals}
		For a partition $\Pi$, one has $q_\Pi=p^*$ if and only if $r$ is constant on each block $B\in\Pi$.
	\end{lemma}
	\begin{proof}[of Lemma \ref{lem:qequals}]
		Lemma \ref{lem:qequals} was in fact proved in \cite{GyenisZ-Redei-Bayes-BlindSpot-Synthese} (see Proposition 3.1 in \cite{GyenisZ-Redei-Bayes-BlindSpot-Synthese}), where it was stated that $p^*$ is in the Bayes Blind Spot of $p$ if and only if the Radon-Nikodym derivative of $p^*$ with respect to $p$ is an injective function. For completeness we provide here a compact proof:  \\
        Fix $B\in\Pi$ and $i\in B$. The identity $q_\Pi(i)=p^*(i)$ is
		\[
		p^*(B)\frac{p(i)}{p(B)}=p^*(i)
		\quad\Longleftrightarrow\quad
		\frac{p^*(i)}{p(i)}=\frac{p^*(B)}{p(B)}.
		\]
		The right-hand side depends only on $B$, not on $i\in B$, hence $r$ must be constant on $B$.
		Conversely, if $r$ is constant on $B$ then the displayed equivalence holds for all $i\in B$, hence $q_\Pi=p^*$. End of proof of Lemma \ref{lem:qequals}
	\end{proof}

	Continuing the proof of Theorem \ref{thm:main1}: In particular, if $r$ takes between $2$ and $n-1$ distinct values, then letting $\Pi$ be the partition into
	level sets of $r$ yields $q_\Pi=p^*$. Also, if $p=p^*$ (i.e.\ $r$ is constant) then $q_\Pi=p^*$ for every $\Pi$.
	Thus, assumption (A) implies that $r$ is \emph{injective}.
	
	\noindent Let $g:X\to \mathbb{R}$ be an arbitrary function. By definition,
	\begin{equation}
	    \E_{p(\cdot\mid B)}[g] = \sum_{i}p(i\mid B)g(i)\,,
	\end{equation}
	and thus
	\begin{equation}\label{eq:Jeffrey-expect}
		\E_{q_\Pi}[g]=\sum_{B\in\Pi} p^*(B)\,\E_{p(\cdot\mid B)}[g]\,.
	\end{equation}
	By the law of total expectation\footnote{$\E_{p}(X) = \sum_{B\in\Pi}\E_{p}(X\mid B)p(B)$, where $\E_{p}(X\mid B) = \E_{p(\cdot\mid B)}(X)$.}
	for $p^*$,
	\begin{equation}\label{eq:seged:1}
		\E_{p^*}[g]=\sum_{B\in\Pi} p^*(B)\,\E_{p^*(\cdot\mid B)}[g].
	\end{equation}
	Subtractiing (\ref{eq:Jeffrey-expect}) from (\ref{eq:seged:1}) gives
	\begin{equation}\label{eq:loss-decomp}
		\E_{p^*}[g]-\E_{q_{\Pi}}[g]
			=\sum_{B\in\Pi} p^*(B)\Big(\E_{p^*(\cdot\mid B)}[g]-\E_{p(\cdot\mid B)}[g]\Big).
	\end{equation}
	
	\noindent Let us now define the function $g\colon X\to\mathbb{R}$ by
	\begin{equation}\label{def:g}
		g(i)=\left\{
  \begin{array}{ll}
    \log\frac{p^*(i)}{p(i)}, & \hbox{if\ } p(i)\not=0 \\
    0, & \hbox{if\ } p(i)=0
  \end{array}
\right.
	\end{equation}

	\begin{lemma}\label{lem:KL}
		If $r$ is injective, then for every proper non-trivial partition $\Pi$ we have
		\[
			\E_{q_{\Pi}}[g] < \E_{p^*}[g]\,.
		\]
	\end{lemma}
	\begin{proof}[of Lemma \ref{lem:KL}]
		Recall (see e.g. \cite{Amari2016}[p. 57]) that the Kullback--Leibler divergence between two probability measures $q_1$ and $q_2$ is defined by
        \begin{equation}
         D(q_1\| q_2)=\sum_i q_1(i)\log\frac{q_1(i)}{q_2(i)}
  \end{equation}
        with the convention that $ q_1(i)\log\frac{q_1(i)}{q_2(i)}$ is taken to be 0 if $q_1(i)=0$ and $+\infty$ if $q_1(i)>0$ and $q_2(i)=0$ for some $i$. The divergence
        $D(q_1\| q_2)$ is always non-negative \cite{Amari2016}[p. 59] (Gibbs' inequality), and it is finite if $q_2(i)>0$ for all $i$. Taking $q_1=p^*$ and $q_2=p$ and considering (\ref{def:g}) we obtain
		\begin{equation}\label{eq:Div}
			D(p^*\|p)=\sum_{i\in X} p^*(i)\log\frac{p^*(i)}{p(i)}=\E_{p^*}[g].
		\end{equation}
        Since we assumed that $p(i)>0$ for all $i$, the Kullback--Leibler divergence $D(p^*\|p)$ is finite. 
		To simplify formulas we introduce the abbreviations $p^*_B=p^*(\cdot\mid B)$ and $p_B=p(\cdot\mid B)$. For a fixed block $B$ and $i\in B$ we have
		\begin{equation}\label{g-on-B}
		  g(i)=\log\frac{p^*(i)}{p(i)}
			=\log\frac{p^*(i\mid B)}{p(i\mid B)}+\log\frac{p^*(B)}{p(B)}.  
		\end{equation}
		Taking the 
        the expectation value of $g$ with respect to $p^*_B$ and $p_B$, using (\ref{g-on-B}) and keeping mind the convention that the contribution to the Kullback-Leibler divergence of a term $ q_1(i)\log\frac{q_1(i)}{q_2(i)}$ is taken to be 0 if $q_1(i)=0$, we obtain
		\begin{eqnarray}
			\E_{p^*_B}[g]&=&D(p^*_B\|p_B)+\log\frac{p^*(B)}{p(B)}\label{eq:help1}\\
			\E_{p_B}[g]&=&-D(p_B\|p^*_B)+\log\frac{p^*(B)}{p(B)}\label{eq:help2}.
		\end{eqnarray}
		Subtracting (\ref{eq:help2}) from (\ref{eq:help1}) cancels the $\log\frac{p^*(B)}{p(B)}$ term and yields
		\begin{equation}
			\E_{p^*_B}[g]-\E_{p_B}[g]=D(p^*_B\|p_B)+D(p_B\|p^*_B)\ge 0, \label{eq:kulonbseg}
		\end{equation}
		with equality iff $p^*_B=p_B$. Summing over blocks with weights $p^*(B)$, \eqref{eq:loss-decomp} gives
		\[
			\E_{q_{\Pi}}[g] \leq \E_{p^*}[g]\,.
		\]
		Equality $\E_{q_\Pi}[g]=D(p^*\|p)$ holds iff $p^*(\cdot\mid B)=p(\cdot\mid B)$ for every
		block $B\in\Pi$. By Lemma~\ref{lem:qequals}, this is equivalent to $r$ being constant on each block.
		If $r$ is injective, the only subsets on which $r$ is constant are singletons. Every proper non-trivial partition has at least
		one block $B$ with $|B|\ge 2$, so on that block $r$ is not constant; hence $p^*(\cdot\mid B)\neq p(\cdot\mid B)$, so the
		corresponding term in \eqref{eq:kulonbseg} is strictly positive. Therefore the whole sum is
		positive and $\E_{q_\Pi}[g] < \E_{p^*}[g]$. End of proof of Lemma \ref{lem:KL}.
	\end{proof}
	
	Continuing the proof of Theorem \ref{thm:main1}: By assumption (B) we already know that $r$ is injective; hence by Lemma \ref{lem:KL} for every proper non-trivial partition
	$\Pi$ we have $\E_{q_{\Pi}}[g] < \E_{p^*}[g]$.
	Set
	\[
		M=\max_{\Pi\in\mathfrak P} \E_{q_\Pi}[g].
	\]
	Finiteness of $\mathfrak P$ ensures that the maximum exists.
	By Lemma \ref{lem:KL}, for every proper non-trivial $\Pi$ we have $\E_{q_\Pi}[g]<\E_{p^*}[g]$, hence
	\[
		M < \E_{p^*}[g].
	\]
	Let $\Delta =\E_{p^*}[g]-M>0$ and fix any $\varepsilon$ with $0<\varepsilon<\Delta$. Define the function $d\colon X\to \mathbb{R}$ by
	\[
		d(i) = g(i)-(M+\varepsilon),
	\]
	and set $f_2\equiv 0$ and $f_1=d$. Then we have the decision
    \begin{equation}\label{eq:decision-in-proof}
     \E_{p^*}[d]=\E_{p^*}[g]-(M+\varepsilon)=\Delta-\varepsilon>0\,.  
     \end{equation}
	On the other hand, for every proper non-trivial $\Pi$ we have
	\[
		\E_{q_\Pi}[d]=\E_{q_\Pi}[g]-(M+\varepsilon)\le M-(M+\varepsilon)=-\varepsilon<0,
	\]
	so the decision (\ref{eq:decision-in-proof}) is \emph{strongly} conditionally $p$-inaccessible. 
	This establishes (A).
\end{proof}
It is known (see \cite{GyenisZ-Redei-Bayes-BlindSpot-Synthese}) that the Bayes $p$-Blind Spot of every probability measure $p$ is a very large set: it has continuum cardinality, it is topologically fat (Bair second category) and it has the same measure as the measure of the set of all probability measures (with respect to a natural measure on the set of all probability measures). In view of this, Theorem \ref{thm:main1} says that given \textit{any} subjective probability $p$,  there exist a \textit{continuum} number of objective probability measures for \textit{each} of which there exist a \textit{continuum} number of pairs of utility functions (parametrized by the real number $\varepsilon$ in the proof) such that the objectively good decision based on calculating their expectation values using the objective probability is conditionally $p$-inaccessible. Thus, conditional $p$-inaccessibility is not a rare phenomenon; quite on the contrary: it is a robust and substantial one. For any given subjective probability the set of potential objective probabilities that determine objectively good decisions that are conditionally $p$-inaccessible dominate the set of all probability measures -- this is because the Bayes $p$-Blind Spot does. 
Thus there is an enormous number of decision situations in which the Agent's subjective probability prohibits the Agent's informed rational decision to be objectively good.         

\def\Bell{\operatorname{Bell}}

\section{Degree of conditional $p$-inaccessibility}
In \cite{Redei-Jing2026} a notion of degree of condional $p$-inaccessibility of a decision was defined, which is intended to characterize how far a decision is from being conditionally $p$-inaccessible (Definition 7.1 in \cite{Redei-Jing2026}). In the notation of this paper the definition is: 

\begin{definition}[Degree of conditional $p$-inaccessibility] Given a decision context $\langle (X,\mathcal{S},p^*) f_1, f_2\rangle$ and a subjective probability $p$, for the decision $\E_{p^*}[d]>0$ with $d=f_1,-f_2$, define the \emph{inaccessible set} $\mathcal{I}(d)$ by
	\[
		\mathcal I(d)=\{\Pi\in\mathfrak P:\ \E_{q_\Pi}[d]\le 0\}.
	\]
	The \emph{degree} of condional $p$-inaccessibility (denoted by $\deg(d)$) of the decision $\E_{p^*}[d]>0$ is by definition the size of $\mathcal{I}(d)$:
	\[
		\deg(d)=|\mathcal I(d)|.
	\]
    where $|\mathcal{I}(d)|$ is the cardinality of the set $\mathcal{I}(d)$.
\end{definition}
Thus if $\deg(d)$ is equal to the number of all proper non-trivial partitions of the set $X$ having $n$ elements, then the decision is conditionally $p$-inaccessible. The number of all partitions of a set having $n$ elements is called the Bell-number $\Bell(n)$ \cite{Conway-Guy1996}; so the number of all proper non trivial partitions is $\Bell(n)-2$. In case of a conditional $p$-inaccessible decision we thus have $\deg(d)=\Bell(n)-2$ because the finest partition is not proper and the two element partition is trivial. If $\deg(d)=0$, then $p$ is such that updating it on \emph{any} partial information about $p^*$ and using the updated probability measure to calculate expectation values of the utility functions, we obtain the objectively good decision. In the intermediate cases the higher the number $\deg(d)$ the less suitable $p$ is for making decisions in the decision context determined by the objective probability $p^*$.    
\medskip

\noindent {\bf Conjecture 7.2 in \cite{Redei-Jing2026}:}
In any finite probability space having $n \geq 3$ number of elementary events there exist decisions 
such that for any $k$ such that $0 \leq k \leq \Bell(n) - 2$ there exist priors $p_k$ with the property that the decisions are conditionally $p_k$-inaccessible to degree $k$.

\medskip
Here we prove the above conjecture. 
The conjecture can be re-phrased by saying that the achievable degrees of conditional $p$-inaccessibility for a decision is the full set  $k\in\{0,1,\dots, \Bell(n)-2\}$, where the notion of ``achievable degrees of conditional $p$-inaccessibility of a decision'' is given by the following definition:

\begin{definition}
  Given a decision context $\big\langle (X,\mathcal{S},{p}^*),f_1,f_2\big\rangle$, we call the  numbers $k\in\{0,1,\dots, \Bell(n)-2\}$ for which there exist $p_k$ such that the decision $\E_{p^*}[d]>0$ ($d=f_1-f_2)$) is conditionally $p_k$-inaccessible to degree $\deg(d)=k$ the \emph{achievable degrees of condional} $p$-\emph{inaccessibility} of the decision.    
\end{definition}

The obstruction beyond $p^*$ being in the Bayes $p$-Blind spot is that different partitions can, in principle, yield the same posterior.

\begin{definition}
	Let
	\[
		\mathcal Q=\{q_\Pi:\ \Pi\in\mathfrak P\}
	\]
	be the set of posteriors. For $q\in\mathcal Q$ define its multiplicity
	\[
		m(q)=\big|\{\Pi\in\mathfrak P:\ q_\Pi=q\}\big|.
	\]
	Thus $\sum_{q\in\mathcal Q} m(q)=\Bell(n)-2$.
\end{definition}
The next theorem characterizes the achievable degrees of conditional $p$-inaccesibility for fixed $(p^*,p)$:
\begin{theorem}\label{thm:degrees}
	Consider again the function $r$ defined by eq. (\ref{eq:def-of-r}) and assume that it is injective. Then there exists a single function $u:X\to\mathbb R$ such that the numbers
	$\{\E_q[u]:q\in\mathcal Q\}$ are pairwise distinct. Fix such a function $u$, let $g$ be the function specified by \eqref{def:g} and set $g_\eta=g+\eta u$
	for sufficiently small $\eta>0$.
	Then:
	\begin{enumerate}
		\item All values $\E_{q_\Pi}[g_\eta]$ depend only on $q_\Pi$ and are strictly smaller than $\E_{p^*}[g_\eta]$.
		\item If we list $\mathcal Q=\{q^{(1)},\dots,q^{(L)}\}$ so that
		\[
			\E_{q^{(1)}}[g_\eta]<\E_{q^{(2)}}[g_\eta]<\cdots<\E_{q^{(L)}}[g_\eta],
		\]
		and define the cumulative sums
		\[
			K_\ell=\sum_{j=1}^{\ell} m\big(q^{(j)}\big)\qquad(\ell=1,\dots,L),
		\]
		then the set of degrees realized by \emph{some} $d$ with $\E_{p^*}[d]>0$ is precisely
		\[
			\{0,\,K_1,\,K_2,\,\dots,\,K_L\}=\Big\{\sum_{j=1}^{\ell} m\big(q^{(j)}\big):\ \ell = 1, \ldots, L\Big\}.
		\]
		Moreover, each such degree is realized by a function of the form $d=g_\eta-c$ with a suitable constant $c$.
	\end{enumerate}
\end{theorem}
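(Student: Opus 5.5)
The plan has three parts: build the function $u$ and check item 1, show every prefix degree $0,K_1,\dots,K_L$ is attained by a shift $d=g_\eta-c$, and then prove the converse for an arbitrary $d$ with $\E_{p^*}[d]>0$, which is the hard part. I expect the converse to hold cleanly when every multiplicity is $1$, and to be genuinely doubtful otherwise.

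\emph{Construction of $u$ and item 1.} The set $\mathcal Q$ is finite and its elements are distinct vectors in $\mathbb R^n$. For $q\neq q'$ the set $\{u:\E_q[u]=\E_{q'}[u]\}$ is a hyperplane in $\mathbb R^n$. A finite union of hyperplanes does not cover $\mathbb R^n$, so a $u$ separating all pairs of posteriors exists. Since $\E_{q_\Pi}[g_\eta]$ depends on $\Pi$ only through $q_\Pi$, the first half of item 1 is immediate. For the strict inequality, Lemma \ref{lem:KL} (with $r$ injective) gives the gap
\[
\Delta=\E_{p^*}[g]-\max_{q\in\mathcal Q}\E_q[g]>0 .
\]
Choosing $\eta\,(\E_{p^*}[|u|]+\max_q\E_q[|u|])<\Delta$, the bound survives the perturbation. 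Shrinking $\eta$ further, the values $\E_q[g_\eta]$, $q\in\mathcal Q$, are pairwise distinct. This holds because for each pair the difference is affine in $\eta$ with nonzero slope, so it vanishes for at most one $\eta$.

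\emph{Achievability.} List $\mathcal Q$ as in item 2, write $v_\ell=\E_{q^{(\ell)}}[g_\eta]$, and set $v_{L+1}=\E_{p^*}[g_\eta]$, so that $v_1<\dots<v_L<v_{L+1}$. For $d=g_\eta-c$ we have
\[
\Pi\in\mathcal I(d)\iff \E_{q_\Pi}[g_\eta]\le c .
\]
Taking $c$ below $v_1$ gives degree $0$. Taking $c\in[v_\ell,v_{\ell+1})$ gives exactly the partitions whose posterior is one of $q^{(1)},\dots,q^{(\ell)}$, counted with multiplicity; that is, degree $K_\ell$. In every case $c<v_{L+1}$ ensures $\E_{p^*}[d]>0$. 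This also proves the ``moreover'' clause.

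\emph{Converse for general $d$.} First, for any $d$ the set $\mathcal I(d)$ is a union of full fibres $\{\Pi:q_\Pi=q\}$, because $\E_{q_\Pi}[d]$ depends only on $q_\Pi$. Hence
\[
\deg(d)=\sum_{q\in S_d}m(q),\qquad S_d=\{q\in\mathcal Q:\E_q[d]\le 0\}.
\]
It remains to show that every such multiplicity sum equals some prefix sum $K_\ell$.
\begin{itemize}
\item[(a)] \emph{All multiplicities equal to $1$.} Then $K_\ell=\ell$ and $L=\Bell(n)-2$, so $\{0,K_1,\dots,K_L\}$ is all of $\{0,\dots,\Bell(n)-2\}$. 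Since every degree lies in this range, the converse is trivial.
\item[(b)] \emph{Some multiplicity exceeds $1$.} I would identify the structure of the fibres. By Lemma \ref{lem:qequals}, $q_\Pi/p$ is constant on blocks, with value $p^*(B)/p(B)$. So $q_\Pi=q_{\Pi'}$ forces $\Pi$ and $\Pi'$ to differ only by merging or splitting blocks that have equal ratio $p^*(B)/p(B)$. I would then try to show that any set $S_d$ cut out by $d$ with $p^*$ on the positive side has the same total multiplicity as some prefix of the $g_\eta$-ordering.
\end{itemize}

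The obstacle is case (b). I expect the genuine difficulty, and possibly a counterexample, to be there: a general $d$ can cut $\mathcal Q$ along a hyperplane ordering posteriors differently from $g_\eta$. In that case the multiplicity-weighted sum of a non-prefix set need not coincide with any $K_\ell$. My first step would be to test this on $n=4$, where coincident posteriors can occur. If it fails, the converse can only be proved under the hypothesis that all $m(q)=1$, or for $d$ in the family $g_\eta-c$, and I would record this limitation explicitly rather than claim the general statement.
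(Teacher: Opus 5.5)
Your construction of $u$, the perturbation estimate for item 1, and the thresholding $d=g_\eta-c$ are exactly Steps 1--3 of the paper's proof, and they are correct. For the reverse inclusion the paper offers only one sentence: any $d$ ``induces an ordering of $\mathcal Q$ and can only select unions of initial segments, counted with multiplicity''. But these are initial segments of the ordering by $\E_q[d]$, not of the ordering by $\E_q[g_\eta]$ that defines the $K_\ell$. That is precisely the conflation you flag in (b). Your suspicion is right: the reverse inclusion is false. So case (b) cannot be closed by any argument, and recording the limitation is the correct outcome. What is true in general is your formula: the achievable degrees are the sums $\sum_{q\in S}m(q)$ over sets $S=\mathcal Q\cap H$, where $H$ is a closed half-space not containing $p^*$. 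This set contains every $K_\ell$, for every admissible $u$. It equals $\{0,K_1,\dots,K_L\}$ when all $m(q)=1$, where that equality is trivial.

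Here is a counterexample at $n=4$, the case you proposed to test. Let $p=(\tfrac{1}{10},\tfrac{2}{10},\tfrac{3}{10},\tfrac{4}{10})$ and $p^*=(\tfrac{3}{20},\tfrac{3}{20},\tfrac{5}{20},\tfrac{9}{20})$, so $r=(\tfrac32,\tfrac34,\tfrac56,\tfrac98)$ is injective.

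For $\{\{1,2\},\{3,4\}\}$ and $\{\{1,3\},\{2,4\}\}$ every block satisfies $p^*(B)=p(B)$, so both give $q_\Pi=p$. Comparing the thirteen functions $q_\Pi/p$ shows this is the only coincidence, so $m(p)=2$, $L=12$, and all other multiplicities are $1$.

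Moreover $\E_p[g]=-D(p\|p^*)\approx-0.0246$, while the other eleven posteriors give $\E_q[g]>-0.014$; the smallest, $\approx-0.0133$, comes from $\{\{1,2,4\},\{3\}\}$. Hence for every admissible $u$ and small $\eta$ one has $q^{(1)}=p$, $K_1=2$, and $\{0,K_1,\dots,K_L\}=\{0,2,3,\dots,13\}$.

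Now take $d=(0.905,-0.095,-0.095,-0.095)$, so that $\E_q[d]=q(1)-0.095$ for every probability $q$. Then $\E_{p^*}[d]=0.055>0$. The only proper non-trivial partition with $q_\Pi(1)<0.095$ is $\{\{1,2,3\},\{4\}\}$, where $q_\Pi(1)=\tfrac{11}{12}\cdot\tfrac{1}{10}=\tfrac{11}{120}$. Every other $\Pi$ has $q_\Pi(1)\in\{0.1,\,0.10625,\,\tfrac{3}{28},\,0.12,\,0.15\}$. Thus $\deg(d)=1\notin\{0,K_1,\dots,K_L\}$.

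In this example all degrees $0,\dots,13$ are attained although $\Pi\mapsto q_\Pi$ is not injective. So the same example also refutes the implication (1)$\Rightarrow$(2) of Corollary~\ref{cor:full}.
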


\begin{proof} We prove the theorem in three steps:\\
	\textbf{Step 1:} The existence of $u$.
	Think of each $q\in\mathcal Q$ as a vector in $\mathbb R^n$.
	For distinct $q\neq q'$, the set of $u\in\mathbb R^n$ with $\E_q[u]=\E_{q'}[u]$ is the hyperplane
	$\{u:\ (q-q')\cdot u=0\}$. Since there are finitely many pairs, the union of these hyperplanes is a proper subset of $\mathbb R^n$,
	so we may choose $u$ outside of it. Then $\E_q[u]$ are pairwise distinct for $q\in\mathcal Q$.

	\noindent \textbf{Step 2}: Let $g$ be as in \eqref{def:g}, and $g_\eta=g+\eta u$ for sufficiently small $\eta>0$.
	By the proof of Theorem \ref{thm:main1},
	\[
		\delta=\E_{p^*}[g]-\max_{\Pi\in\mathfrak P}\E_{q_\Pi}[g] > 0.
	\]
	Let $R$ be defined by 
    \begin{equation}
    R=\max_{q\in\mathcal Q}|\E_q[u]|+|\E_{p^*}[u]|<\infty
    \end{equation}.
	If $0<\eta<\delta/(2R)$, then
	\[
	\E_{p^*}[g_\eta]-\max_{\Pi}\E_{q_\Pi}[g_\eta]
	\ge \delta-\eta\cdot 2R>0,
	\]
	so $\E_{q_\Pi}[g_\eta]<\E_{p^*}[g_\eta]$ for all $\Pi$.

	\noindent \textbf{Step 3}:
	Because $\E_q[u]$ are pairwise distinct on $\mathcal Q$, for small $\eta$ the values
	$\E_q[g_\eta]=\E_q[g]+\eta\E_q[u]$ are also pairwise distinct on $\mathcal Q$.
	Order $\mathcal Q=\{q^{(1)},\dots,q^{(L)}\}$ increasingly by $\E_{q^{(j)}}[g_\eta]$.
	
	Fix $\ell\in\{0,1,\dots,L\}$, where $\ell=0$ means ``none''.
	Choose a constant $c$ such that
	\[
	\E_{q^{(\ell)}}[g_\eta] < c < \E_{q^{(\ell+1)}}[g_\eta],
	\]
	with the conventions $\E_{q^{(0)}}[g_\eta]=-\infty$ and $\E_{q^{(L+1)}}[g_\eta]=+\infty$.
	Define $d=g_\eta-c$. Then for a partition $\Pi$,
	\[
	\E_{q_\Pi}[d]\le 0
	\quad\Longleftrightarrow\quad
	\E_{q_\Pi}[g_\eta]\le c
	\quad\Longleftrightarrow\quad
	q_\Pi\in\{q^{(1)},\dots,q^{(\ell)}\}.
	\]
	Hence, exactly $\sum_{j\le \ell} m(q^{(j)})=K_\ell$ partitions lie in $\mathcal I(d)$, i.e.\ $\deg(d)=K_\ell$.

	Finally, since 
    \begin{equation}
    c<\E_{q^{(\ell+1)}}[g_\eta]\le \max_{\Pi}\E_{q_\Pi}[g_\eta]<\E_{p^*}[g_\eta]   
    \end{equation} we also have
	\begin{equation}
	 \E_{p^*}[d]=\E_{p^*}[g_\eta]-c>0.   
	\end{equation}
	Thus each $K_\ell$ is achievable, and no other values are possible because any $d$ induces an ordering of $\mathcal Q$
	and can only select unions of initial segments, counted with multiplicity.
\end{proof}

\begin{corollary}\label{cor:full}
	Assume $r$ is injective. Then the following are equivalent:
	\begin{enumerate}
		\item For every $k\in\{0,1,\dots,\Bell(n)-2\}$ there exists $d$ with $\E_{p^*}[d]>0$ and $\deg(d)=k$.
		\item The map $\Pi\mapsto q_\Pi$ is injective on $\mathfrak P$ (equivalently, $m(q)=1$ for all $q\in\mathcal Q$).
	\end{enumerate}
	Under these equivalent conditions, one may realize every $k$ by thresholding $d=g_\eta-c$ as in Theorem~\ref{thm:degrees}.
\end{corollary}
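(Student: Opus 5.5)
The plan is to read Corollary~\ref{cor:full} off Theorem~\ref{thm:degrees}. Since $r$ is injective, Theorem~\ref{thm:degrees} applies. It says that the set of degrees realized by decisions $d$ with $\E_{p^*}[d]>0$ is exactly $\{0,K_1,\dots,K_L\}$. Here $L=|\mathcal Q|$ and the $K_\ell$ are cumulative sums of the multiplicities $m(q^{(j)})$ along the ordering induced by $g_\eta$. This set is strictly increasing in $\ell$, because every $m(q)\ge 1$, so it has exactly $L+1$ elements. Moreover $K_L=\sum_{q\in\mathcal Q}m(q)=\Bell(n)-2$. The corollary therefore reduces to counting: the achievable degrees equal $\{0,1,\dots,\Bell(n)-2\}$ if and only if $L+1=\Bell(n)-1$, i.e.\ $L=\Bell(n)-2$.

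For (2)$\Rightarrow$(1), injectivity of $\Pi\mapsto q_\Pi$ gives $m(q)=1$ for every $q\in\mathcal Q$. Then $L=|\mathfrak P|=\Bell(n)-2$ and $K_\ell=\ell$ for each $\ell$. By Theorem~\ref{thm:degrees} every $k\in\{0,\dots,\Bell(n)-2\}$ is realized, namely by $d=g_\eta-c$ with $c$ chosen strictly between the $k$-th and $(k+1)$-st values $\E_{q^{(j)}}[g_\eta]$. This also gives the final sentence of the corollary about realization by thresholding.

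For (1)$\Rightarrow$(2), argue by contraposition. Suppose some $q\in\mathcal Q$ has $m(q)\ge 2$. Then $L<\sum_q m(q)=\Bell(n)-2$, so the set $\{0,K_1,\dots,K_L\}$ has at most $L+1<\Bell(n)-1$ elements. Hence some $k\in\{0,\dots,\Bell(n)-2\}$ is not realized by any $d$ with $\E_{p^*}[d]>0$. One can name it: if $q^{(j)}$ has multiplicity $\ge 2$, then $K_{j-1}+1$ lies strictly between $K_{j-1}$ and $K_j$ and is skipped.

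The only delicate point is the "precisely" part of Theorem~\ref{thm:degrees}, namely that no $d$ can realize a degree outside $\{0,K_1,\dots,K_L\}$. The direction (1)$\Rightarrow$(2) relies on this. It holds directly, because $\E_{q_\Pi}[d]$ depends only on $q_\Pi$. So $\mathcal I(d)$ is always a union of full fibres $\{\Pi: q_\Pi=q\}$, and $\deg(d)$ is a sum of multiplicities over a subset of $\mathcal Q$.

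Strictly, this fibre argument alone shows only that a fibre of size $\ge 2$ cannot be split. It does not by itself show that $k=1$ fails. Suppose every multiplicity were at least $2$; then $k=1$ is clearly unachievable, since every achievable degree is a sum of multiplicities. In general, though, I would rely on the theorem's statement that only initial segments in the $g_\eta$ order arise. I would present this as a direct invocation of Theorem~\ref{thm:degrees}, with the fibre remark as justification that the restriction to whole fibres is genuine.
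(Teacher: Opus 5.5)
\textbf{Direction (2)$\Rightarrow$(1).} This direction and the closing sentence are fine. They use only the achievability half of Theorem~\ref{thm:degrees} (each $K_\ell$ is realized by $d=g_\eta-c$), which is correctly proved. Your route is exactly the paper's two-line argument.

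\textbf{Direction (1)$\Rightarrow$(2).} Your hesitation here is justified, and the gap is fatal. The clause ``no other values are possible'' of Theorem~\ref{thm:degrees} does not follow. Both you and the paper rely on it. A general $d$ induces its own order on $\mathcal Q$, not the $g_\eta$-order. All that is true is your fibre remark: $\mathcal I(d)$ is the union of the fibres over $S=\{q\in\mathcal Q:\E_q[d]\le 0\}$. Here $S$ can be any subset of $\mathcal Q$ that some affine functional on the simplex separates from $(\mathcal Q\setminus S)\cup\{p^*\}$. Sums of multiplicities over such sets can fill all of $\{0,\dots,\operatorname{Bell}(n)-2\}$ even when some $m(q)\ge 2$. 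So the count ``$L+1<\operatorname{Bell}(n)-1$'' proves nothing. The degree $K_{j-1}+1$ that you name is skipped only by the thresholds of $g_\eta$, not by every $d$.

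\textbf{Counterexample.} In fact the implication is false.
\begin{enumerate}
\item Take $n=4$, $p=(0.1,0.2,0.3,0.4)$ and $p^*=(0.15,0.25,0.25,0.35)$. Then $r=(1.5,\,1.25,\,5/6,\,7/8)$ is injective.
\item Every block of $\{\{1,3\},\{2,4\}\}$ and every block of $\{\{1,4\},\{2,3\}\}$ satisfies $p^*(B)=p(B)$. So both partitions give $q_\Pi=p$, hence $m(p)=2$ and (2) fails.
\item A direct computation shows the other eleven posteriors are pairwise distinct. Among all posteriors, $p$ has the strictly smallest value of $\E_q[g]$: it equals $-D(p\|p^*)\approx-0.0229$, the next being $\approx-0.0137$ for $\{\{1,2,3\},\{4\}\}$.
\item Hence the thresholds realize exactly $\{0,2,3,\dots,13\}$, and Theorem~\ref{thm:degrees} would forbid degree $1$.
\item Now use $q_\Pi(1)=0.1\,p^*(B)/p(B)$ for $1\in B\in\Pi$. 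This equals $0.09375$ for $\Pi=\{\{1,3,4\},\{2\}\}$ and is at least $0.1$ for every other $\Pi\in\mathfrak P$, while $p^*(1)=0.15$.
\item So $d=\mathbf{1}_{\{1\}}-0.095$, which is non-strictly positive, has $\E_{p^*}[d]>0$ and $\deg(d)=1$.
\end{enumerate}
Thus every $k\in\{0,\dots,13\}$ is attained, although $\Pi\mapsto q_\Pi$ is not injective. Your argument does legitimately prove that (2) is equivalent to every $k$ being attained by thresholds $d=g_\eta-c$. The converse as stated cannot be rescued by a better proof.
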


\begin{proof}
	By Theorem~\ref{thm:degrees}, the achievable degrees are exactly $\{0,K_1,\dots,K_L\}$.
	This set equals $\{0,1,\dots,M\}$ iff all multiplicities are $1$, i.e.\ iff $\Pi\mapsto q_\Pi$ is injective.
\end{proof}

\section*{Appendix}

\begin{proof}[of Theorem \ref{thm:main2}] 
The proof consists of two parts: in Part 1 we assume that $p^*$ is nowhere zero, and we prove that the theorem's claim is true under this assumption. In the second part we allow $p^*$ to take zero value on some $i$. In this second part, using $p^*$, $p$ and $d$, we construct a new decision context $\big\langle (X,\mathcal{S},p^{\varepsilon}),f^{\varepsilon}_1,f^{\varepsilon}_2\big\rangle$ with decision $\E_{p^\varepsilon}[d^{\varepsilon}]>0$ ($d^{\varepsilon}=f^{\varepsilon}_1-f^{\varepsilon}_2$ ), with a \emph{nowhere zero} $p^{\varepsilon}$. Thus, by Part 1 of the proof, for this new decision the theorem's claim holds, and we show that this is in contradiction with the assumption that $\E_p[d]\geq 0$.  \\ 

{\bf Part 1:}
    Assume that $p^*$ is nowhere zero, and let 
    $r(i) = p(i)/p^*(i)$. 
	By Theorem \ref{thm:main1} we know that $r$ is injective (in Theorem \ref{thm:main1} injectivity is proved for the function $i\mapsto p^*(i)/p(i)$,
    and thus its reciprocal function is also injective). Hence we may order $X$ so that 
	\[
		r(1)<r(2)<\cdots<r(n).
	\]
	For each $1\leq m \leq n-1$, let $\Pi_m$ be the proper partition that has exactly one $2$-block
	$\{m,m+1\}$ and otherwise singletons. Let $q_m=q_{\Pi_m}$. Then
	\[
		q_m(i) = \begin{cases}
			p^*(i) & \text{ if } i\notin\{m,m+1\},\\
			\frac{p^*(\{m,m+1\})}{p(\{m, m+1\})} p(i) & \text{ if } i=m, \text{ or } i=m+1\,.			
		\end{cases}
	\]
	For $1\leq m\leq n-1$ let
	\[
		S_m =\sum_{i\le m}\bigl(p^*(i)-p(i)\bigr) = p^*(\{1, \ldots, m\}) - p(\{1, \ldots, m\})
	\]
	We claim that $S_m>0$ for every $m$. Indeed, as $r$ is strictly increasing there is an index $k$ where $r$ crosses $1$.
	Then, for $m\leq k$ where $r(m)\leq 1$, from the definition we have $S_m > 0$. For $m\geq k$ where $r(m)\geq 1$,
	we have $S_m=\sum_{i>m}(p(i)-p^*(i))$  (because $\sum_i p^*(i)r(i)=\sum_i p(i)=1$), and thus $S_m>0$ follows.

	Consider $p$, $p^*$, and $q_m$ as vectors in $\mathbb{R}^n$, and let $e_k$ be the $k$th standard basis vector in $\mathbb{R}^n$.
	By an elementary telescoping it is easy to check that
	\begin{equation}
		p-p^* = \sum_{m=1}^{n-1}S_m(e_{m+1}-e_m)\,. \label{eq:pminuspstar}
	\end{equation}
	\begin{lemma}\label{seged}
		There are scalars $t_m>0$ such that
		\[
			p-p^* = \sum_{m=1}^{n-1}t_m(q_m-p^*)\,,
		\]
		and $t = \sum_{m=1}^{n-1}t_m > 1$.
	\end{lemma}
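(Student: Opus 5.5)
The plan has three steps: compute each difference $q_m-p^*$ explicitly, read off $t_m$ by comparing with \eqref{eq:pminuspstar}, and then prove $\sum_m t_m>1$ with a suitable linear functional.

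\textbf{Step 1: the differences $q_m-p^*$.} Write $P_m=p(\{m,m+1\})$ and $P^*_m=p^*(\{m,m+1\})$. By the displayed formula for $q_m$, the vector $q_m-p^*$ is supported on the coordinates $m,m+1$. Its two nonzero entries sum to $P^*_m-P^*_m=0$, because $q_m$ and $p^*$ agree on the block $\{m,m+1\}$. Hence
\[
q_m-p^*=a_m\,(e_{m+1}-e_m),\qquad a_m=q_m(m+1)-p^*(m+1)=\frac{p(m+1)p^*(m)-p(m)p^*(m+1)}{P_m}.
\]
Writing $r(i)=p(i)/p^*(i)$, this becomes $a_m=p^*(m)p^*(m+1)\bigl(r(m+1)-r(m)\bigr)/P_m$. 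This is strictly positive, because $r$ is strictly increasing in the chosen ordering and $p^*$ is nowhere zero (Part 1).

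\textbf{Step 2: definition of $t_m$.} Compare with \eqref{eq:pminuspstar}. The vectors $e_{m+1}-e_m$, $m=1,\dots,n-1$, are linearly independent, so the representation $p-p^*=\sum_m t_m(q_m-p^*)$ exists and is unique, with $t_m=S_m/a_m$. Since $S_m>0$ (shown just before the lemma) and $a_m>0$, we get $t_m>0$ for all $m$. This settles the first half of the lemma.

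\textbf{Step 3: $t>1$ (the main obstacle).} A direct termwise bound $t_m>1$, i.e. $S_m>a_m$, does not obviously hold. The natural estimate uses monotonicity of $r$ to get
\[
S_m\ge A_m(1-A_m)\bigl(r(m+1)-r(m)\bigr),\qquad A_m=p^*(\{1,\dots,m\}).
\]
But $a_m$ carries the factor $p^*(m)p^*(m+1)/P_m$, which can be larger than $A_m(1-A_m)$, so the comparison can go the wrong way. So I will argue globally through a separating linear functional.

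Look for $\psi\colon\mathbb R^n\to\mathbb R$ and a constant $c>0$ such that
\[
0<\psi(q_m-p^*)\le c \ \text{ for all } m,\qquad \psi(p-p^*)>c.
\]
Then applying $\psi$ to the representation gives $c<\psi(p-p^*)=\sum_m t_m\psi(q_m-p^*)\le c\,t$, hence $t>1$.

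The first candidate is $\psi(v)=\sum_i v(i)\log r(i)$. Then
\[
\psi(p-p^*)=D(p\|p^*)+D(p^*\|p),\qquad \psi(q_m-p^*)=a_m\bigl(\log r(m+1)-\log r(m)\bigr)>0.
\]
The remaining task is to bound each $\psi(q_m-p^*)$ by a quantity strictly below the symmetrized divergence. I would try to get this by viewing $\psi(q_m-p^*)$ as the Jeffreys divergence of the two-point problem on the block $\{m,m+1\}$, scaled by $P^*_m$. That is, by the computation \eqref{eq:kulonbseg} from Lemma~\ref{lem:KL} applied to the single block $\{m,m+1\}$. The bound should then follow from the chain-rule decomposition of $D(p\|p^*)+D(p^*\|p)$ over the coarser partition, with strictness coming from injectivity of $r$ and $n\ge3$.

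If the log functional does not give a uniform constant $c$, the fallback is the $\chi^2$-type functional $\psi(v)=\sum_i v(i)\,r(i)$. For it, $\psi(q_m-p^*)=a_m\bigl(r(m+1)-r(m)\bigr)$ and $\psi(p-p^*)=\sum_i p(i)^2/p^*(i)-1$, and I would attempt the same comparison. I expect this step to be the crux of the lemma.
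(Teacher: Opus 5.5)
Your Steps 1 and 2 are correct and match the paper: your $a_m$ is the paper's $A_m$, and comparing with \eqref{eq:pminuspstar} gives $t_m=S_m/a_m>0$. The genuine gap is Step 3, because you never establish $t>1$. Reducing to a separating functional is logically sound, but you leave $\max_m\psi(q_m-p^*)<\psi(p-p^*)$ unproved for both candidates. Moreover, the chain-rule argument you sketch for the logarithmic one does not deliver it. With $B=\{m,m+1\}$ you have $\psi(q_m-p^*)=p^*(B)\bigl(D(p^*_B\|p_B)+D(p_B\|p^*_B)\bigr)$. The chain rule over $\Pi_m$ gives $D(p\|p^*)+D(p^*\|p)=C_m+p^*(B)D(p^*_B\|p_B)+p(B)D(p_B\|p^*_B)$, where $C_m\ge 0$ is the symmetrized divergence of the coarse-grained measures. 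So whenever $p(B)<p^*(B)$ you would still need $C_m>\bigl(p^*(B)-p(B)\bigr)D(p_B\|p^*_B)$, and nothing in your sketch addresses this. As written, only the first half of the lemma is proved.

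The missing idea is that the termwise bound you set aside does hold in non-strict form, $a_m\le S_m$. This gives $t_m\ge 1$ for every $m$, hence $t\ge n-1\ge 2$. Do not estimate $S_m$ through the gap $r(m+1)-r(m)$, which is too lossy. Instead keep a single sign-definite summand of $S_m$, choosing the side according to the mass of the block. Write $a=p^*(m)$, $b=p^*(m+1)$, $u=p(m)$, $v=p(m+1)$.

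If $u+v\le a+b$, then $r(m)\le 1$: otherwise $1<r(m)<r(m+1)$ forces $u>a$ and $v>b$, contradicting $u+v\le a+b$. Hence all summands $p^*(i)-p(i)$ with $i\le m$ are nonnegative and $S_m\ge a-u$. Also $a_m\le a-u$ is equivalent (multiply by $u+v$, divide by $u$) to $u+v\le a+b$.

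If $u+v\ge a+b$, then symmetrically $r(m+1)\ge 1$, so $S_m=\sum_{i>m}\bigl(p(i)-p^*(i)\bigr)\ge v-b$. Also $a_m\le v-b$ is equivalent to $v\bigl((u+v)-(a+b)\bigr)\ge 0$.

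Once $t_m\ge 1$ is known, your separation inequality holds for every $\psi$ positive on all $q_m-p^*$, because $\psi(p-p^*)=\sum_m t_m\psi(q_m-p^*)\ge\sum_m\psi(q_m-p^*)>\max_m\psi(q_m-p^*)$. So your route is not wrong, just harder than the direct termwise bound the paper uses.
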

	\begin{proof}[of Lemma \ref{seged}]
		By inspecting $q_m-p^*$ the idea is to define
		\[
			A_m = q_m(m+1)-p^*(m+1)=\frac{p(m+1)p^*(m)-p(m)p^*(m+1)}{p(m)+p(m+1)}.
		\]
		Since $r(m)<r(m+1)$, the numerator is positive, hence $A_m>0$.
		Moreover
		\begin{equation}\label{eq:qm-shift}
			q_m-p^* \;=\; A_m\,(e_{m+1}-e_m),
		\end{equation}
		Combining this with \eqref{eq:pminuspstar} we get
		\[
			p-p^* = \sum_{m=1}^{n-1}\frac{S_m}{A_m}(q_m-p^*)\,
		\]
		and thus letting $t_m = \frac{S_m}{A_m}$ we need to prove $t_m>0$ and $t = \sum_{m=1}^{n-1}t_m > 1$.
		We in fact prove that $t_m\geq 1$, and as $n$ is at least $3$, the result follows.\\
		
		\noindent Claim: For each $m$, one has $A_m\le S_m$, and consequently $\frac{S_m}{A_m}\ge 1$.
		Write
		\[
			a =p^*(m),\quad  b=p^*(m+1),\quad u=p(m), \quad v=p(m+1)\,.
		\]
		So $A_m=\frac{av-ub}{u+v}>0$. There are two cases.\\

		\noindent Case (i): $u+v\le a+b$. Then necessarily $u\le a$ and $v\le b$
		(otherwise $u>a$ would imply $r(m)>1$ and hence	$r(m+1)>1$, giving $v>b$ and thus $u+v>a+b$).
		In particular, every ratio $r(i)\le r(m)\le 1$ for $i\le m$, so each $p^*(i)-p(i)\ge 0$ and hence $S_m\ge a-u$.
		Also,
		\[
			A_m\le a-u
			\quad\Longleftrightarrow\quad
			av-ub \le (a-u)(u+v)
			\quad\Longleftrightarrow\quad
			b \ge (u+v)-a,
		\]
		and the last inequality holds because $u+v\le a+b$. Thus $A_m\le a-u\le S_m$.

		\noindent Case (ii): $u+v\ge a+b$. Then necessarily $v\ge b$ (otherwise $v<b$ would imply $r(m+1)<1$, hence $r(m)<1$, giving $u<a$ and
		thus $u+v<a+b$). Hence $v-b\ge 0$, and since every $r(i)\ge r(m+1)\ge 1$ for $i>m$, we have
		\[
		S_m=\sum_{i>m}\bigl(p(i)-p^*(i)\bigr)\ge v-b.
		\]
		Moreover,
		\[
		A_m\le v-b
		\quad\Longleftrightarrow\quad
		av-ub \le (v-b)(u+v)
		\quad\Longleftrightarrow\quad
		0 \le v\bigl((u+v)-(a+b)\bigr),
		\]
		which holds because $u+v\ge a+b$. Thus $A_m\le v-b\le S_m$.
	\end{proof}
	
	\noindent Rearranging the equation in Lemma \ref{seged} we get
	\[
		p = \sum_{m=1}^{n-1}t_m q_m - (t-1) p^*\,.
	\]
	Taking expected values,
	\[
		\E_{p}[d] = \sum_{m=1}^{n-1}t_m \E_{q_m}[d] - (t-1) \E_{p^*}[d]\,.
	\]
	By the assumption of the theorem ($p$-inaccessibility of the decision $\E_{p^*}(d)$), 
    and since each $\Pi_m$
	is proper non-trivial, we have $\E_{q_m}[d]\leq 0$ and $\E_{p^*}[d]>0$, and since $t-1>0$, we also
	have $(t-1)\E_{p^*}[d] > 0$. It follows that
	\[
		\E_{p}[d] \leq 0 - (t-1)\E_{p^*}[d] < 0\,.
	\]
    
    {\bf Part 2:} 
    Now that the theorem has already been proved in the case where $p^*$ is strictly positive at every point, let us deal with the general case when $p^*$ can have zero values: assume  $p^*(i)=0$ for some $i\in X$. (Recall we assumed that $p(i)>0$ for every $i\in X$.) Suppose that $d:X\to\mathbb R$ satisfies
    \[
        \E_{p^*}[d]>0\qquad\text{and}\qquad
        \E_{q_\Pi}[d]\leq 0
    \]
    for every proper non-trivial partition $\Pi$ of $X$. We have to prove that $\E_p[d]<0$. 
    
    For $0<\varepsilon<1$, define a new probability measure $p^\varepsilon$ by
    \[
        p^\varepsilon=(1-\varepsilon)p^*+\varepsilon p.
    \]
    Since $p(i)>0$ for every $i\in X$, we have $p^\varepsilon(i)>0$ for every $i\in X$. Thus $p^\varepsilon$ is nowhere zero.

    Let $q^\varepsilon_\Pi$ denote the Jeffrey posterior obtained from the prior $p$ determined by $\Pi$ by conditioning using $p^\varepsilon$, that is,
    \[
        q^\varepsilon_\Pi(i) = \sum_{B\in \Pi}
        p^\varepsilon(B)p(i\mid B).    
    \]
    We claim that for every partition $\Pi$,
    \[
        q^\varepsilon_\Pi=(1-\varepsilon)q_\Pi+\varepsilon p.
    \]
    Indeed, if $i\in B\in\Pi$, then
    \begin{align*}
        q^\varepsilon_\Pi(i) &=
        p^\varepsilon(B)\frac{p(i)}{p(B)} =
        \big((1-\varepsilon)p^*(B)+\varepsilon p(B)\big)\frac{p(i)}{p(B)} \\ 
        &= (1-\varepsilon)p^*(B)\frac{p(i)}{p(B)} +
        \varepsilon p(i) \\
        &= (1-\varepsilon)q_\Pi(i)+\varepsilon p(i).      
    \end{align*} 
    Assume, in order to derive a contradiction, that $\E_p[d]\geq 0$. Define
    \[
        d^\varepsilon=d-\varepsilon \E_p[d].
    \]
    Then
    \begin{align*}
        \E_{p^\varepsilon}[d^\varepsilon] &=
        \E_{p^\varepsilon}[d-\varepsilon\E_p[d]] = 
        \E_{p^\varepsilon}[d] -
        \E_{p^\varepsilon}[\varepsilon\E_p[d]] =
        \E_{p^\varepsilon}[d] -
        \varepsilon\E_p[d]  \\
        &=
        (1-\varepsilon)\E_{p^*}[d] +
        \varepsilon \E_p[d] -
        \varepsilon \E_p[d] =
        (1-\varepsilon)\E_{p^*}[d].
    \end{align*}
    Since $\E_{p^*}[d]>0$, it follows that $\E_{p^\varepsilon}[d^\varepsilon]>0$.
    Let $\Pi$ be any proper non-trivial partition. Using 
    $q^\varepsilon_\Pi=(1-\varepsilon)q_\Pi+\varepsilon p$, we obtain
    \[
        \E_{q^\varepsilon_\Pi}[d^\varepsilon] =
        (1-\varepsilon)\E_{q_\Pi}[d] +
        \varepsilon \E_p[d] -
        \varepsilon \E_p[d] =
        (1-\varepsilon)\E_{q_\Pi}[d].
    \]
    Since $\E_{q_\Pi}[d]\leq 0$, this gives $\E_{q^\varepsilon_\Pi}[d^\varepsilon]\leq 0$ for every proper non-trivial partition $\Pi$.

    Thus, $d^\varepsilon$ satisfies the hypotheses of the theorem with $p^\varepsilon$ in place of $p^*$. Since $p^\varepsilon$ is nowhere zero, 
    the proof in Part 1 about the strictly positive $p^*$ yields
    \[
        \E_p[d^\varepsilon]<0.
    \]
    But
    \[
        \E_p[d^\varepsilon] =
        \E_p[d-\varepsilon \E_p[d]] =
        (1-\varepsilon)\E_p[d].
    \]
    Therefore $(1-\varepsilon)\E_p[d]<0$. Since $1-\varepsilon>0$, we get $\E_p[d]<0$, contradicting the assumption $\E_p[d]\geq 0$. 
    Consequently, $\E_p[d]<0$. 
\end{proof}

\section*{Acknowledgment}
Research supported by the project no. 2022/47/B/HS1/01581 of the National Science Centre, Poland; and by the Hungarian National Research, Development and Innovation Office, grant number: ADVANCED 152165.


\end{document}